\newtheorem{axiom}{Axiom}[section]
\newtheorem{theorem}{Theorem}[section]
\begin{document}

\begin{titlepage}

\vskip 1.5in
\begin{center}
{\bf\Large{Axiomatic TQFT, Axiomatic DQFT, and \\ Exotic 4-Manifolds}}\vskip 0.5cm 
{Kelly J. Davis}
\end{center}
\vskip 0.5in
\baselineskip 16pt

\date{June, 2011}

\begin{abstract}
In this article we prove that any unitary, axiomatic topological quantum
field theory in four-dimensions can not detect changes in the smooth 
structure of $M$, a simply connected, closed (compact without boundary), 
oriented smooth manifold. However, as Donaldson-Witten theory (a 
topological quantum field theory but not an axiomatic one) is able to
detect changes in the smooth structure of such an $M$, this seemingly
leads to a contradiction. This seeming contradiction is resolved by
introducing a new set of axioms for a ``differential quantum field 
theory'', which in truth only slightly modify the naturality and 
functoriality axioms of a topological quantum field theory, such
that these new axioms allow for a theory to detect changes in smooth
structure.
\end{abstract}

\end{titlepage}

\vfill
\eject 
\tableofcontents

\section{Introduction}
\label{Section:Introduction}

The fountainhead of modern topological quantum field theory can be
traced back to the work of Witten~\cite{Witten82}. There he proved
that the Morse inequalities~\cite{Milnor63} can be obtained through
the use of a certain supersymmetric version of quantum mechanics.
The next major milestone in the study of topological quantum field
theory was also authored by Witten~\cite{Witten88}. There he 
showed that the Donaldson polynomials~\cite{Donaldson90} can be 
interpreted as observables of a certain four-dimensional quantum
field theory. Subsequently, Witten~\cite{Witten89} authored 
\textit{Quantum Field Theory and the Jones Polynomial}, 
for which, in large part, he 
received the Fields Medal. There he proved that the Jones 
polynomial~\cite{Jones85} can be interpreted as an observable of
a certain three-dimensional quantum field theory. It was after the
publication of this paper that the flood gates opened and the
volume of papers dealing with topological quantum field theory
began to greatly increase.

With this increased volume of work on topological quantum field theory,
many mathematicians started to become more interested in the subject.
However, due to the methods used, in particular the mathematically
ill-defined path-integral~\cite{Ashtekar74}, many of the results 
were valid to a physicist's level of rigour, but not to a 
mathematician's. This soon changed when Atiyah~\cite{Atiyah88}, 
motivated by Witten~\cite{Witten89} and Segal~\cite{Segal88}, 
axiomatized the foundations of topological quantum field theory. This
axiomatization made it possible for mathematicians to obtain
rigorous results.

However, Atiyah's axiomatization is based on experiences from
topological quantum field theories in three or fewer dimensions.
The axiomatization is rarely used in four or more dimensions. 
Hence, there are a dearth of results using axiomatic topological 
quantum field theory in four or more dimensions, and the axioms
themselves may contain hidden ``biases'' that ``favor''
three or fewer dimensions. In particular, application of the
axiomatization to four dimensions~\cite{Thurston11} tends to
yield theories that are ``trivial'' in that they can not detect
changes in smooth structure\footnote{Thurston's mathoverflow
answer~\cite{Thurston11} and subsequent discussion were the
original motivation for this article.}.

In this article we will take a first step in to higher dimensions and
examine axiomatic topological quantum field theory in four 
dimensions. We prove, formalizing the difficulties expressed
by Thurston~\cite{Thurston11}, that in four dimensions any 
unitary, axiomatic topological quantum field theory can not 
detect changes in the smooth structure of $M$, a simply connected, 
closed (compact without boundary), oriented smooth 
four-manifold. This motivates us to slightly modify the axioms of a
topological quantum field theory so that it is possible for an axiomatic 
topological quantum field theory to detect changes in the
smooth structure of such an $M$. Thus, these modified axioms 
could more accurately be dubbed axioms of a differential
quantum field theory.

\section{Axiomatic TQFT}
\label{Section:AxiomaticTQFT}

In his ground--breaking work Witten~\cite{Witten88} introduced an 
``informal'' definition of a topological quantum field 
theory, a quantum field theory on a smooth manifold $M$ that is 
independent of the metric placed on $M$. Atiyah~\cite{Atiyah88},
motivated by Witten's informal definition and Segal's~\cite{Segal88}
axiomatization of two-dimensional conformal field theory, then axiomatized 
topological quantum field theory. Over the years several 
authors have explored and refined Atiyah's  
axiomatization, see~\cite{Quinn:1991kq} and~\cite{Turaev:1994xb}, 
resulting in the current formulation~\cite{Blanchet2006232}, which we 
describe below.

\subsection{Axiomatic TQFT}
\label{SubSection:AxiomaticTQFT}

An $(n + 1)$-dimensional topological quantum field theory, from now on
abbreviated TQFT, over a field $\mathbb{F}$ assigns to every closed,
oriented $n$-dimensional smooth manifold $X$ a finite dimensional 
vector space $\mathcal{H}(X)$ over $\mathbb{F}$ and assigns to every 
$(n+1)$-dimensional cobordism $W$ from $X_-$ to $X_+$
an $\mathbb{F}$ linear map,
\begin{equation}
Z(W,X_-,X_+): 
\mathcal{H}(X_-) \rightarrow \mathcal{H}(X_+).
\end{equation}
Recall that given two closed, oriented $n$-dimensional smooth manifolds
$X_\pm$ a \textit{cobordisim} from $X_-$ to $X_+$ is a compact, 
oriented $(n+1)$-dimensional smooth manifold $W$ such that $\partial 
W = X_- \amalg  X_+$, where $\partial W$ is the boundary of $W$
and $\amalg$ denotes disjoint union. The assignments $\mathcal{H}(X)$
and $Z(W,X_-,X_+)$ must satisfy the following axioms.

\subsubsection{Naturality}
\label{SubSubSection:Naturality}

\begin{axiom}[Naturality]
\label{Axiom:Naturality}
Any orientation--preserving diffeomorphism of closed, oriented
$n$-dimensional smooth manifolds $f:X \rightarrow X'$ induces an 
isomorphism\footnote{Note, we use $f$ to denote the 
orientation--preserving diffeomorphism and the isomorphism. Context
should prevent any confusion in this regard.} $f: \mathcal{H}(X) 
\rightarrow \mathcal{H}(X')$. For an orientation--preserving diffeomorphism 
$g$ from the cobordism $(W,X_-,X_+)$ to the cobordism $(W',X_-',X_+')$, the 
following diagram is commutative.
\[
\xymatrixcolsep{5pc}
\xymatrix{
\mathcal{H}(X_-) \ar[d]_{Z(W)} \ar[r]^{g_{|_{X_-}}} &\mathcal{H}(X_-')\ar[d]^{Z(W')}\\
\mathcal{H}(X_+) \ar[r]^{g_{|_{X_+}}} &\mathcal{H}(X_+')}
\]
Note, $Z(W)$ is shorthand for $Z(W,X_-,X_+)$ and
$Z(W')$ is shorthand for $Z(W',X_-',X_+')$.
\end{axiom}

\subsubsection{Functoriality}
\label{SubSubSection:Functoriality}

\begin{axiom}[Functoriality]
\label{Axiom:Functoriality}
If a cobordism $(W,X_-,X_+)$ is obtained by gluing\footnote{The formal
definition of \textit{gluing} is given in Chapter VI Section 5 of 
Kosinski~\cite{Kosinski93}.} two cobordisms $(M,X_-,X)$ and 
$(M',X',X_+)$ using an orientation-preserving diffeomorphism 
$f: X \rightarrow X'$, then the following diagram is commutative.
\[
\xymatrixcolsep{5pc}
\xymatrix{
\mathcal{H}(X_-) \ar[d]_{Z(M)} \ar[r]^{Z(W)} &\mathcal{H}(X_+)\\
\mathcal{H}(X) \ar[r]^{f} &\mathcal{H}(X')\ar[u]_{Z(M')}}
\]
\end{axiom}

\subsubsection{Normalization}
\label{SubSubSection:Normalization}

\begin{axiom}[Normalization]
\label{Axiom:Normalization}
For any closed, oriented $n$-dimensional smooth manifold $X$, the
$\mathbb{F}$ linear map
\begin{equation*}
Z(X \times [0,1]): 
\mathcal{H}(X) \rightarrow \mathcal{H}(X)
\end{equation*}
is the identity.
\end{axiom}

\subsubsection{Multiplicativity}
\label{SubSubSection:Multiplicativity}

\begin{axiom}[Multiplicativity]
\label{Axiom:Multiplicativity}
There are functorial isomorphisms
\begin{equation*}
\mathcal{H}(X \amalg Y) \longrightarrow \mathcal{H}(X) \otimes \mathcal{H}(Y)
\end{equation*}
and
\begin{equation*}
\mathcal{H}(\emptyset) \longrightarrow \mathbb{F}
\end{equation*}
such that the diagrams
\[
\xymatrix{
\mathcal{H}((X_1 \amalg X_2) \amalg X_3) \ar[d] \ar[r] &(\mathcal{H}(X_1) \otimes \mathcal{H}(X_2)) \otimes \mathcal{H}(X_3)\ar[d]\\
\mathcal{H}(X_1 \amalg (X_2 \amalg X_3)) \ar[r] &\mathcal{H}(X_1) \otimes (\mathcal{H}(X_2) \otimes \mathcal{H}(X_3))}
\]
and
\[
\xymatrix{
\mathcal{H}(X \amalg \emptyset) \ar[d] \ar[r] &\mathcal{H}(X) \otimes \mathbb{F}\ar[d] \\
\mathcal{H}(X) \ar[r]^{id} &\mathcal{H}(X)}
\]
commute. Note, the vertical maps are induced by the obvious
diffeomorphisms and the standard vector space isomorphisms. 
\end{axiom}

\subsubsection{Symmetry}
\label{SubSubSection:Symmetry}

\begin{axiom}[Symmetry]
\label{Axiom:Symmetry}
The isomorphism
\begin{equation*}
\mathcal{H}(X \amalg Y) \longrightarrow \mathcal{H}(Y \amalg X)
\end{equation*}
induced by the obvious diffeomorphism corresponds to the standard
isomorphism of vector spaces
\begin{equation*}
\mathcal{H}(X) \otimes \mathcal{H}(Y) \longrightarrow \mathcal{H}(Y) \otimes \mathcal{H}(X).
\end{equation*}
\end{axiom}

\subsection{Remarks}
\label{SubSection:Remarks}

Before continuing on with the remainder of this article, there are
a few points of note that easily follow from the above axioms and
that we will have need of later. 

First, an axiomatic TQFT defines invariants for closed, oriented 
$(n+1)$-dimensional smooth manifolds. In more detail, a closed, 
oriented $(n+1)$-dimensional smooth manifold $W$ can be thought 
of as a cobordism from  $\emptyset$ to $\emptyset$. Thus, $Z(W) 
\in Hom_{\mathbb{F}}(\mathbb{F}, \mathbb{F}) = \mathbb{F}$, and 
$Z(W) \in \mathbb{F}$ is simply a numerical invariant of $W$.

Second, any compact, oriented $(n+1)$-dimensional smooth manifold
$W$ with boundary can be thought of as a cobordism from  
$\emptyset$ to $\partial W$. Thus, $Z(W) \in Hom_{\mathbb{F}}
(\mathbb{F}, \mathcal{H}(\partial W)) = \mathcal{H}(\partial W)$. 
So, $Z(W)$ in this case is simply a vector in $\mathcal{H}
(\partial W)$. This vector $Z(W)$ is called the 
\textit{vacuum vector} of $W$ and we will find it of great use in 
what follows.

Finally, for a closed, oriented $n$-dimensional smooth manifold
$X$ the manifold $X \times [0,1]$ can be considered as a cobordism
from  $\overline{X} \amalg X$ to $\emptyset$, where $\overline{X}$
is $X$ with its orientation reversed. Hence, $Z(X \times [0,1])$ 
can be viewed as an $\mathbb{F}$ linear map
\begin{equation}
Z(X \times [0,1]): 
\mathcal{H}(\overline{X}) \otimes \mathcal{H}(X) \rightarrow \mathbb{F}.
\end{equation}
This gives a functorial isomorphism of $\mathcal{H}(\overline{X}) =
\mathcal{H}(X)^* = Hom_{\mathbb{F}}(\mathcal{H}(X), \mathbb{F})$. 

Thus, if a closed, oriented $(n+1)$-dimensional smooth manifold $W$ 
is obtained by gluing $M$ to $M'$, where $\partial M = 
\overline{\partial M'}$, then Axiom~\ref{Axiom:Functoriality}, the functoriality axiom, implies $Z(W) = \langle Z(M') | Z(M) \rangle 
\in \mathbb{F}$, where $Z(M)$ and $Z(M')$ are viewed as vacuum 
vectors and $\langle Z(M') | Z(M) \rangle$ is defined as the value of 
$Z(M') \in \mathcal{H}(\partial M)^*$ acting on $Z(M) \in \mathcal{H}
(\partial M)$.

\subsection{Unitarity}
\label{SubSection:Unitarity}

An additional axiom that is sometimes used in conjunction with the
above set of standard axioms is that of unitarity.

\begin{axiom}[Unitarity]
\label{Axiom:Unitarity}
For any compact, oriented $(n+1)$-dimensional smooth manifold $W$
with non-zero $Z(W) \in \mathcal{H}(\partial W)$ the element 
$Z(\overline{W} \cup_{id} W) = \langle Z(\overline{W}) | Z(W) \rangle 
\in \mathbb{F}$ is not zero.
\end{axiom}

Unitarity  is sometimes, but not always, taken as an axiom of
TQFT. However, all ``physical'' theories, for example the 
standard model~\cite{Peskin95} and general relativity~\cite{Hawking05},
are unitary. Thus, we will assume that any axiomatic TQFT that we deal
with obeys the unitarity axiom.

\section{Akbulut Corks and Exotic Four-Manifolds}
\label{Section:AkbulutCorksAndExoticFourManifolds}

The wellspring of many an idea related to exotic four-manifolds can
be traced back to the work of Akbulut~\cite{Akbulut88}. In this
foundational work Akbulut found that for a certain smooth four-manifold 
$M$ one can make an exotic copy $M'$ of $M$, a manifold homeomorphic 
but not diffeomorphic to $M$, by cutting out and regluing $A_C$, a
certain four-dimensional smooth submanifold of $M$, by an involution 
of its boundary $\partial A_C$. This smooth four-manifold $A_C$ later 
became known as  an Akbulut cork.

This means of generating exotic four-manifolds was later 
generalized in a preprint of Curtis and Hsiang. The proofs in 
this preprint were then simplified and extended through the work
of Curtis, Freedman, Hsiang, and Strong~\cite{Curtis96}, 
Matveyev~\cite{Matveyev95}, Bi\u{z}aca, and Kirby~\cite{Kirby97}.

In this section, to place these developments in the proper context, 
we will review the theorems that built up to the discovery of Akbulut 
corks, Smale's h-cobordisim theorem~\cite{Smale62} and Freedman's 
h-cobordisim theorem~\cite{Freedman82}, as well as reviewing the 
theorems presented in the above series of papers. These theorems will
be presented without proofs. The interested reader can refer to 
original works and/or to Chapter 9 of Gompf and 
Stipsicz~\cite{Gompf99} where most of this material is covered.

\subsection{Smale's h-Cobordisim Theorem}
\label{SubSection:SmalesHCobordisimTheorem}

Classification of four-dimensional smooth manifolds up to 
diffeomorphism can best be understood, strangely enough, by 
looking first at the classification of smooth manifolds up to
diffeomorphism in greater than four dimensions. Looking at the
results in higher dimensions serves to put the results 
in four dimensions in to the proper context.

The key result used to classify manifolds up to diffeomorphism
in greater than four dimensions is Smale's h-cobordisim 
theorem~\cite{Smale62}. This theorem establishes a criteria 
through which one can determine if two simply connected, closed, 
oriented smooth $n$-manifolds, where $n > 4$, are diffeomorphic.
It is this theorem which we will now review.

However, before presenting Smale's h-cobordisim theorem, we must
introduce a definition~\cite{Gompf99}. Two simply connected smooth
manifolds $X_-$ and $X_+$ are \textit{h-cobordant} if there exists 
a cobordisim $W$ from $X_-$ to $X_+$ such that the inclusions 
$i_\pm: X_\pm \hookrightarrow W$ are homotopy equivalences. Given 
this definition we can now state Smale's h-cobordisim 
theorem~\cite{Gompf99}.

\begin{theorem}[Smale's h-Cobordisim Theorem]
\label{Theorem:SmaleHCobordisimTheorem}
If $W$ is an h-cobordisim between the $n$-dimensional smooth 
manifolds $X_-$ and $X_+$, where $n > 4$, then $W$ is diffeomorphic
to $X_- \times [0,1] $. In particular $X_-$ is diffeomorphic to 
$X_+$. 
\end{theorem}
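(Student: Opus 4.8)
The plan is to prove the theorem through Morse theory and the calculus of handle decompositions, following the classical strategy of Smale. First I would choose a Morse function $f \colon W \to [0,1]$ with $f^{-1}(0) = X_-$, $f^{-1}(1) = X_+$, and no critical points on $\partial W$; together with a gradient-like vector field this presents $W$ as $X_- \times [0,1]$ with one handle attached for each critical point, a $k$-handle for each index-$k$ critical point. The goal is then to show that all of these handles can be cancelled in pairs, so that $W$ is left diffeomorphic to the trivial cobordism $X_- \times [0,1]$; restricting a diffeomorphism $W \cong X_- \times [0,1]$ to the top boundary will yield $X_+ \cong X_-$, giving the ``in particular'' clause for free.

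Next I would put the handle decomposition into a standard form. By rearranging the gradient-like vector field one may assume $f$ is self-indexing, so that handles are attached in order of non-decreasing index. Since $W$ is connected every $0$-handle can be cancelled against a $1$-handle, and since $X_- \hookrightarrow W$ is a homotopy equivalence (so that $\pi_1(W,X_-)$ vanishes, using that the $X_\pm$ are simply connected) the remaining $1$-handles can be traded for $3$-handles and removed; dualizing via $f \mapsto 1-f$ disposes of the handles of index $n+1$ and $n$ in the same way. This leaves handles only in the ``middle'' range $2 \le k \le n-1$, which is precisely where there is enough ambient room to maneuver.

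The algebraic core is then to exploit the h-cobordism hypothesis. The handles of each index assemble into a based free chain complex $C_* = H_*(W_k, W_{k-1})$ whose homology computes $H_*(W,X_-)$; because $X_- \hookrightarrow W$ is a homotopy equivalence this relative homology vanishes, so the complex is acyclic. Over the trivial group ring (here $\pi_1 = 0$) an acyclic based free complex can be reduced, by a sequence of elementary basis changes, to one in which the boundary operator between consecutive levels contains an identity block. Each such basis change I would realize geometrically by \emph{handle slides}, and each identity block signals a pair of handles of consecutive index whose attaching and belt spheres meet algebraically in a single point.

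The hard part, and the step where the hypothesis $n > 4$ is indispensable, is to pass from this algebraic cancellation to genuine geometric cancellation via the \emph{Whitney trick}. A candidate pair of handles can actually be cancelled only when the relevant attaching sphere meets the corresponding belt sphere transversally in \emph{exactly one} point; in general these spheres meet in several points whose signed count is $\pm 1$, so one must remove the excess oppositely-signed pairs. For each such pair I would produce a Whitney disk — a loop joining the two intersection points bounds a disk by simple connectivity — and use it to isotope one sphere across that disk, eliminating the two intersections at once. The spheres sit inside a level set of dimension $n \ge 5$, which is exactly the dimension needed for general position to embed the two-dimensional Whitney disk disjointly from the spheres and to carry out the isotopy; this is where $n > 4$ enters, and its failure in dimension four (the case of interest for the remainder of this article) is precisely why exotic phenomena persist there. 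Once every middle-index handle has been cancelled, no handles remain, so $W \cong X_- \times [0,1]$ and the proof is complete.
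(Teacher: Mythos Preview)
Your outline is the classical Smale argument and is correct in its essentials: Morse function, self-indexing, elimination of extreme-index handles by cancellation and trading, algebraic acyclicity of the relative handle chain complex, realization of the algebra by handle slides, and finally the Whitney trick in the level sets of dimension $\ge 5$ to convert algebraic into geometric intersection numbers and cancel the middle handles. One small remark: when you trade $1$-handles for $3$-handles you should note that this already uses $n+1 \ge 6$ (or at least $n \ge 4$) so that the replacement $3$-handle is still in the ``middle'' range and so that the trading lemma applies; otherwise the argument is sound.

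However, there is nothing to compare against: the paper does not prove this theorem. It is stated purely as background, and the surrounding text says explicitly that ``these theorems will be presented without proofs'' and directs the reader to the original sources and to Chapter~9 of Gompf--Stipsicz. So your proposal is not an alternative to the paper's proof but rather a supplement where the paper deliberately offers none.
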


With this one can see that if two $n$-dimensional smooth manifolds
are h-cobordant and $n > 4$, then these two manifolds are 
diffeomorphic. In practice this often simplifies the process of
determining if two manifolds are diffeomorphic, as proving two 
manifolds are h-cobordant is often easier than directly proving 
they are diffeomorphic.	 

This theorem can be used to classify smooth manifolds up to 
diffeomorphism in more than four dimensions. However, as we 
will see, this result fails to be true in four dimensions,
where a strictly ``weaker'' result holds. This ``weaker'' 
result is the subject of Freedman's h-cobordisim theorem
to which we now turn.

\subsection{Freedman's h-Cobordisim Theorem}
\label{SubSection:FreedmansHCobordisimTheorem}

One may hope that the techniques used to prove Smale's h-cobordisim 
theorem could be generalized to accommodate the case $n = 4$. However,
this is not possible\footnote{The main problem is that ``Whitney's
Trick'', which works in more than four dimensions, fails in four 
dimensions~\cite{Gompf99}.}.  The best one can do in four dimensions
is Freedman's h-cobordisim theorem~\cite{Gompf99}.

\begin{theorem}[Freedman's h-Cobordisim Theorem]
\label{Theorem:FreedmanHCobordisimTheorem}
If $W$ is an h-cobordisim between the four-dimensional smooth 
manifolds $X_-$ and $X_+$, then $W$ is homeomorphic to 
$X_- \times [0,1] $. In particular $X_-$ is homeomorphic to 
$X_+$. 
\end{theorem}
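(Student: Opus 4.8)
The plan is to follow the classical Morse-theoretic strategy that Smale used to establish Theorem~\ref{Theorem:SmaleHCobordisimTheorem} and to isolate precisely the single step that collapses in the critical dimension, substituting for it Freedman's topological machinery. First I would fix a smooth handle decomposition of the five-dimensional cobordism $W$ built on $X_- \times \{0\}$, so that $W$ is obtained from $X_- \times [0,1]$ by successively attaching handles of index $0$ through $5$. Using the standard handle-manipulation lemmas---handle trading, handle sliding, and the elimination of the $0$-, $1$-, $4$-, and $5$-handles---together with the hypothesis that the inclusions $i_\pm : X_\pm \hookrightarrow W$ are homotopy equivalences (so that $W$ has trivial relative homology and $\pi_1$), I would reduce to a decomposition carrying handles of only the two middle indices $2$ and $3$, equal in number and to be cancelled in pairs. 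Because $W$ is an h-cobordism, the integer matrix recording how the attaching $2$-spheres of the $3$-handles meet the belt $2$-spheres of the $2$-handles in a middle level $N$ is unimodular, and after further handle slides---which realize integral row and column operations---it may be taken to be the identity. This is the purely algebraic content of the cancellation.

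The geometric realization of this algebraic cancellation is where the critical dimension asserts itself. The middle level $N$ is a smooth four-manifold, and to cancel a $2$-handle against its paired $3$-handle one must isotope the relevant belt and attaching spheres within $N$ so that they meet in a single transverse point. The obstruction is a collection of spurious intersection pairs, each removable provided one can find an embedded \emph{Whitney disk} along which to perform a Whitney move. But in $N$ the two spheres are surfaces of complementary dimension two, and a Whitney disk is itself two-dimensional, so the count $2 + 2 = 4$ forces generic Whitney disks to acquire self-intersections and to meet one another and the spheres. The smooth Whitney trick, the very engine of Smale's proof, therefore fails here, and this failure is exactly what obstructs a smooth conclusion.

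The resolution, and the hard part, is Freedman's disk embedding theorem. Because $X_\pm$---and hence $N$ and $W$---are simply connected, the relevant fundamental-group and framing obstructions vanish, and the theorem guarantees that the immersed Whitney disks can be replaced by \emph{topologically} flat, embedded, pairwise-disjoint disks in $N$. I would then carry out the Whitney moves in the topological category, reducing the intersection data to the identity and geometrically cancelling each $2$-handle against its paired $3$-handle. What remains is a handle-free topological cobordism, which is a product; assembling the cancellations produces a homeomorphism $W \cong X_- \times [0,1]$, and restriction to the two ends yields the asserted homeomorphism $X_- \cong X_+$. The overwhelming weight of the argument rests on the disk embedding theorem itself, whose proof requires the construction of infinite towers of layered immersed disks---Casson handles---and the deep fact that each such tower is homeomorphic to a standard open two-handle. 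This is the step I expect to be the genuine obstacle, and it is the reason the conclusion is only topological and not smooth.
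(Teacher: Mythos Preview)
The paper does not prove this statement. Immediately before Theorem~\ref{Theorem:SmaleHCobordisimTheorem} the author writes that the h-cobordism theorems ``will be presented without proofs'' and directs the reader to the original sources and to Chapter~9 of Gompf and Stipsicz; Theorem~\ref{Theorem:FreedmanHCobordisimTheorem} is accordingly stated bare, with no accompanying argument. There is therefore nothing in the paper against which to compare your proposal.

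For what it is worth, your outline is the standard and correct one: reduce a handle decomposition of the five-dimensional $W$ to index-$2$ and index-$3$ handles with algebraically trivial intersection data, observe that the smooth Whitney trick fails in the four-dimensional middle level, and invoke Freedman's disk embedding theorem (via Casson handles) to produce topologically flat Whitney disks and hence a topological product structure. One small point worth tightening: the elimination of $1$- and $4$-handles by handle trading uses simple connectivity of the ends, which in this paper is baked into the very definition of h-cobordant (see the sentence preceding Theorem~\ref{Theorem:SmaleHCobordisimTheorem}); you implicitly use this but might say so explicitly. Otherwise the sketch is sound, and your identification of the disk embedding theorem as the load-bearing step is exactly right.
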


Thus, if two four-dimensional smooth manifolds are 
h-cobordant, then these two manifolds are homeomorphic. In 
four-dimensions this result can not be improved upon. In other words,
there exist four-dimensional smooth manifolds $X_-$ and $X_+$
that are h-cobordant and \textit{not} diffeomorphic~\cite{Gompf99}. 
As they are h-cobordant, Freedman's h-cobordisim theorem implies they
are homeomorphic. But, as they are not diffeomorphic, $X_+$ is an
exotic version of $X_-$, a manifold homeomorphic but not diffeomorphic
to $X_-$. In fact, the original results of Akbulut~\cite{Akbulut88} 
provide such a pair.

As it is a result we will require later, we pause here to note that
one can strengthen Freedman's h-cobordisim theorem in the following 
manner~\cite{Gompf99}.

\begin{theorem}[Strengthened Freedman's h-Cobordisim Theorem]
\label{Theorem:StrengthenedFreedmanHCobordisimTheorem}
Two simply connected, closed, oriented, four-dimensional smooth 
manifolds $X_-$ and $X_+$ are homeomorphic if and only if
they are h-cobordant. 
\end{theorem}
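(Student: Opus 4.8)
The plan is to prove the two implications of the biconditional separately. The reverse implication --- that an h-cobordism between $X_-$ and $X_+$ forces them to be homeomorphic --- is immediate, since it is precisely the conclusion of Freedman's h-cobordism theorem (Theorem~\ref{Theorem:FreedmanHCobordisimTheorem}): if $W$ is an h-cobordism between the four-manifolds $X_-$ and $X_+$, then $W$ is homeomorphic to $X_- \times [0,1]$, so in particular $X_-$ is homeomorphic to $X_+$. No work beyond citing the previous theorem is required in this direction.

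The forward implication --- homeomorphic implies h-cobordant --- is the substantive part, and I would route it through the intersection form. First I would observe that the intersection form on $H_2(X;\mathbb{Z})$ is a homeomorphism invariant: since $X_\pm$ are simply connected, $H_2$ is free (torsion-free) by Poincar\'e duality and the universal coefficient theorem, and the form is defined by cup product on second cohomology, equivalently by the algebraic intersection of embedded surfaces. Both descriptions depend only on the underlying topological and homotopy-theoretic data together with the fundamental class, so a homeomorphism $X_- \to X_+$ induces an isometry of their (unimodular) intersection forms. Second, I would invoke the converse at the level of forms: for simply connected, closed, oriented smooth four-manifolds, an isometry of intersection forms is realized by an h-cobordism. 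This is Wall's theorem on simply connected four-manifolds, as presented in Chapter 9 of~\cite{Gompf99}, which produces a cobordism $W$ from $X_-$ to $X_+$ --- built by attaching handles according to the algebraic isometry --- whose inclusions $i_\pm : X_\pm \hookrightarrow W$ are homotopy equivalences. Composing these two steps yields the desired h-cobordism.

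The main obstacle is this second step, Wall's realization of an abstract isometry of intersection lattices by a geometric h-cobordism. The delicate point is that an isomorphism of lattices is purely algebraic, and one must convert it into handle-attachment data and then verify that the resulting cobordism really is an h-cobordism, i.e.\ that the inclusions of \emph{both} ends are homotopy equivalences. By simple connectivity and Whitehead's theorem this reduces to checking that each inclusion induces an isomorphism on integral homology, which in turn follows from the fact that the middle-dimensional homology of $W$ relative to either end is governed by the chosen isometry.

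One subtlety I would keep in mind is the role of the Kirby--Siebenmann invariant in Freedman's classification of homeomorphism types, since a priori the homeomorphism type of a simply connected closed oriented topological four-manifold is pinned down by the pair consisting of the intersection form and this invariant. In the smooth setting, however, every smooth four-manifold admits a compatible PL structure and hence has vanishing Kirby--Siebenmann invariant, so this invariant is automatically zero on both $X_-$ and $X_+$ and imposes no extra constraint. Consequently the intersection form alone governs the homeomorphism type, the two lattice-theoretic steps above close the loop with Freedman's theorem, and the biconditional follows.
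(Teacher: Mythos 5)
The paper states this theorem without proof, deferring to Chapter~9 of Gompf and Stipsicz~\cite{Gompf99}, so there is no internal argument to compare against; your proposal reconstructs essentially the standard proof found in that reference. The backward implication is, as you say, immediate from Theorem~\ref{Theorem:FreedmanHCobordisimTheorem}. The forward implication via the intersection form --- a homeomorphism of simply connected, closed, oriented four-manifolds induces an isometry of intersection forms, and Wall's realization theorem converts such an isometry (equivalently, by Milnor--Whitehead, a homotopy equivalence) into an h-cobordism --- is the standard route. Your remark about the Kirby--Siebenmann invariant, while true, is not actually needed for the direction you are proving: it would be required for the implication ``isomorphic forms implies homeomorphic,'' which your argument never uses, since you only need that the form is a homeomorphism invariant. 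The one point worth flagging is orientation: your step ``a homeomorphism induces an isometry of forms'' silently assumes the homeomorphism preserves fundamental classes. An orientation-reversing homeomorphism yields only an anti-isometry, and indeed $\mathbb{CP}^2$ and $\overline{\mathbb{CP}^2}$ are homeomorphic yet not orientably h-cobordant, since their signatures differ and the signature is an oriented cobordism invariant. So the statement must be read with ``homeomorphic'' meaning ``orientation-preservingly homeomorphic''; this is a defect of the statement as quoted rather than of your argument, but it is the only place where your proof has a genuine (if conventional) loose end.
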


\subsection{Akbulut Corks}
\label{SubSection:AkbulutCorks}

The results of Akbulut~\cite{Akbulut88}, along with Smale's and Freedman's 
h-cobordisim theorems, lead one to conjecture that it might be possible 
to ``excise'' a submanifold $A$ from $W$, a five-dimensional 
h-cobordisim from $X_-$ to $X_+$, such that the remainder $W - int(A)$ 
is diffeomorphic to $(X_- - int(A)) \times [0,1]$. Thus,
all of the ``strangeness'' that occurs in four dimensions would be 
contained in $A$, and $W - int(A)$ would be ``trivial''. This 
conjecture, and in fact much more, is true, as was found by Curtis, 
Freedman, Hsiang, and Strong~\cite{Curtis96}, 
Matveyev~\cite{Matveyev95}, Bi\u{z}aca, and Kirby~\cite{Kirby97}.

The formal summary of the flurry of work contained in the above articles
is given by the following theorem~\cite{Kirby97}.

\begin{theorem}[Pr\'ecis of Akbulut Corks]
\label{Theorem:PrecisOfAkbulutCorks}
If $W$ is a five-dimensional h-cobordism between two 
smooth four-manifolds $X_-$ and $X_+$, then there exists a 
five-dimensional h-cobordism $A \subset W$ from 
the smooth four-manifold $A_- \subset X_-$ to the
smooth four-manifold $A_+ \subset X_+$ with the 
following properties:
\begin{description}
\item[(1)] $A_-$, and hence $A$ and $A_+$, is contractible.
\item[(2)] $W - int(A)$ is diffeomorphic to $(X_- - int(A_-)) \times [0,1]$.
\item[(3)] $W - A$, and hence $X_- - A_-$ and $X_+ - A_+$, is simply connected.
\item[(4)] $A$ is diffeomorphic to $D^5$, the standard five-dimensional disk with boundary.
\item[(5)] $A_- \times [0,1]$ and $A_+ \times [0,1]$ are diffeomorphic 
to $D^5$.
\item[(6)] $A_-$ is diffeomorphic to $A_+$ by a diffeomorphism which, 
when restricted to $\partial A_- = \partial A_+$, is an involution.
\end{description}
\end{theorem}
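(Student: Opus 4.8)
The plan is to follow the handle-theoretic route of \cite{Curtis96} and \cite{Matveyev95}, refining the Morse theory behind Smale's theorem (Theorem~\ref{Theorem:SmaleHCobordisimTheorem}). First I would give the h-cobordism $W$ a handle decomposition relative to $X_- \times \{0\}$ and normalize it, by handle trading and cancellation, into a decomposition with only $2$-handles and $3$-handles, in equal numbers $k$. This normalization is legitimate because $W$ is an h-cobordism between simply connected manifolds, so its Whitehead torsion vanishes (the Whitehead group of the trivial group being trivial); as a consequence, after handle slides the algebraic intersection matrix between the belt $2$-spheres $\{a_i\}$ of the $2$-handles and the attaching $2$-spheres $\{b_j\}$ of the $3$-handles, taken in the middle-level $4$-manifold $N$, can be reduced to the identity, $a_i \cdot b_j = \delta_{ij}$.

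In dimension $\geq 6$ one would finish here by the Whitney trick, promoting this algebraic matching to a geometric one and cancelling the handles in pairs to conclude $W \cong X_- \times [0,1]$, which is Smale's theorem. The essential obstacle, and the whole reason the five-dimensional case genuinely differs, is that \emph{the Whitney trick fails here}: the Whitney disks needed to remove the surplus (algebraically cancelling) intersection points of the $a_i$ and $b_j$ live in the $4$-manifold $N$, where they cannot in general be made embedded and disjoint. This is the single real difficulty, and overcoming it is the content of \cite{Curtis96}, \cite{Matveyev95}, and \cite{Kirby97}. The resolution I would adopt is not to defeat the obstruction but to \emph{quarantine} it: by finger moves and controlled isotopies one gathers all surplus intersection points, together with the immersed Whitney disks required to pair them, into a single compact region of $N$, and then takes $A$ to be the sub-cobordism of $W$ swept out by thickening that region through the handles. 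Outside $A$ the spheres meet as the identity matrix, the handles cancel geometrically, and one reads off property~\textbf{(2)}, that $W - \mathrm{int}(A)$ is the product $(X_- - \mathrm{int}(A_-)) \times [0,1]$.

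It then remains to extract the listed properties from this construction. Because the intersection data confined to $A$ is the identity matrix, $A$, $A_-$, and $A_+$ have vanishing reduced homology; arranging the gathering region to sit inside a simply connected piece of $N$ makes $A_\pm$ simply connected and hence contractible, giving \textbf{(1)}, and the same control over the attaching regions is what secures simple connectivity of the complement in \textbf{(3)}. The diffeomorphism $A_- \to A_+$ of \textbf{(6)} comes from the up--down symmetry of the cobordism $A$, which interchanges its $2$- and $3$-handles; restricted to the common boundary $\partial A_- = \partial A_+$ (a homology $3$-sphere, not in general $S^3$, which is exactly why the cork is nontrivial) this symmetry is the asserted involution. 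Finally, for \textbf{(4)} and \textbf{(5)} I would invoke the delicate fact, established in the cited works via five-dimensional handle calculus, that the cork $A$ and the products $A_\pm \times [0,1]$ are standard $5$-disks. I expect these, together with the quarantining step, to be the genuinely hard parts of the argument, since both are precisely where the four-dimensional failure of the Whitney trick must be managed rather than circumvented.
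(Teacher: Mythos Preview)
The paper does not actually prove this theorem: Section~\ref{Section:AkbulutCorksAndExoticFourManifolds} explicitly states that ``these theorems will be presented without proofs,'' deferring to \cite{Curtis96}, \cite{Matveyev95}, \cite{Kirby97}, and Chapter~9 of \cite{Gompf99}. So there is no in-paper proof to compare against.

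That said, your sketch is a faithful outline of the argument in those references. The normalization to a $2$-/$3$-handle decomposition with identity algebraic intersection matrix, the identification of the failure of the Whitney trick in the middle level as the sole obstruction, and the ``quarantine'' of the immersed Whitney disks and excess intersections into a compact sub-$h$-cobordism $A$ are exactly the Curtis--Freedman--Hsiang--Stong/Matveyev strategy. Your attribution of the involution in \textbf{(6)} to the $2$-/$3$-handle duality is Matveyev's point of view. One small sharpening: for \textbf{(4)} and \textbf{(5)} you need not leave it as ``delicate five-dimensional handle calculus''; once $A$ and $A_\pm\times[0,1]$ are known to be compact, contractible $5$-manifolds whose boundaries are simply connected (the doubles of $A_\pm$ are homotopy $4$-spheres), the identification with $D^5$ follows from the $5$-dimensional smooth Poincar\'e/h-cobordism theorem rather than anything specifically four-dimensional.
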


The manifolds $A_\pm$ identified above are Akbulut corks and are
a generalization of the manifolds first discovered by 
Akbulut~\cite{Akbulut88} in his foundational work.

Given $W$, $X_\pm$, and $A_\pm$ as appear in the previous theorem,
one can easily prove the following results. As a result of $(2)$,
$X_- - int(A_-)$ is diffeomorphic to $X_+ - int(A_+)$. The definitions
of $X_\pm$ and $A_\pm$ imply $X_\pm = (X_\pm - int(A_\pm)) \cup_{id}
A_\pm$. Thus, as a result of $(6)$, $X_- = (X_- - int(A_-)) \cup_{id}
A_-$ and $X_+ = (X_- - int(A_-)) \cup_{I} A_-$, where $I$ is
the involution of $\partial A_-$ from $(6)$ and all 
equivalences are up to diffeomorphism.

Now, assume one has a simply connected, closed, oriented, smooth 
four-manifold $M$ along with $M'$, a manifold homeomorphic but not 
diffeomorphic to $M$. (In other words, $M'$ is an exotic version of
$M$.) As a result of the strengthened version of Freedman's 
h-cobordisim theorem, $M$ is h-cobordant to $M'$. Thus, as a result
of the argument in the previous paragraph, there exists an Akbulut 
cork $A_C \subset M$ such that $M = (M - int(A_C)) \cup_{id} A_C$
and $M' = (M - int(A_C)) \cup_{I} A_C$, where $I$ is the 
involution of $\partial A_C$ given in $(6)$.

\section{Axiomatic TQFT and Exotic 4-Manifolds}
\label{Section:AxiomaticTQFTAndExotic4Manifolds}

This section will be dedicated to proving our main theorem.

\begin{theorem}
\label{Theorem:MainTheorem}
In four-dimensions any unitary, axiomatic topological quantum field 
theory can not detect changes in the smooth structure of $M$, a 
simply connected, closed (compact without boundary), oriented 
smooth four-manifold.
\end{theorem}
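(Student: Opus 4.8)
The plan is to reduce the equality $Z(M) = Z(M')$ to a rigidity statement about the Akbulut cork and then let unitarity do the work. By the strengthened Freedman h-cobordism theorem and the Pr\'ecis of Akbulut Corks, recorded in the discussion following Theorem~\ref{Theorem:PrecisOfAkbulutCorks}, I may write $M = N \cup_{id} A_C$ and $M' = N \cup_I A_C$, where $N = M - int(A_C)$, the cork $A_C$ has boundary the closed oriented three-manifold $\Sigma = \partial A_C$, and $I$ is the boundary involution from property (6). Regarding $N$ and $A_C$ as cobordisms to and from $\emptyset$, the vacuum-vector formalism of Subsection~\ref{SubSection:Remarks} gives $Z(A_C) \in \mathcal{H}(\Sigma)$ and $Z(N) \in \mathcal{H}(\overline{\Sigma}) = \mathcal{H}(\Sigma)^*$. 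Applying the functoriality axiom (Axiom~\ref{Axiom:Functoriality}) to the two gluings then yields $Z(M) = \langle Z(N) \,|\, Z(A_C)\rangle$ and $Z(M') = \langle Z(N) \,|\, I_* Z(A_C)\rangle$, where $I_* : \mathcal{H}(\Sigma) \to \mathcal{H}(\Sigma)$ is the isomorphism induced by $I$ via naturality (Axiom~\ref{Axiom:Naturality}), gluing by $id$ inserting nothing as in the Remarks. Hence it suffices to prove that the cork's vacuum vector is fixed by the involution, $I_* Z(A_C) = Z(A_C)$.

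The crux is a pair of topological identifications, both furnished by Theorem~\ref{Theorem:PrecisOfAkbulutCorks}. Property (5) gives $A_\pm \times [0,1] \cong D^5$; taking boundaries identifies the \emph{untwisted} double $A_C \cup_{id} \overline{A_C} = \partial(A_C \times [0,1]) \cong S^4$. Property (6) supplies an orientation-preserving diffeomorphism $\phi : A_- \to A_+$ with $\phi|_{\partial} = I$; transporting the double of $A_+$ across $\phi$ turns the identity gluing into the involution and identifies the \emph{twisted} double $A_C \cup_I \overline{A_C} \cong S^4$ as well, the matching of orientations being routine bookkeeping on top of (5) and (6). Computing $Z(S^4)$ through each decomposition, and using the unitary structure to identify $Z(\overline{A_C})$ with the adjoint of $Z(A_C)$ and $\langle Z(\overline{A_C})\,|\,\cdot\,\rangle$ with an inner product $(Z(A_C),\cdot)$, I obtain the two scalar identities $(Z(A_C), Z(A_C)) = Z(S^4)$ and $(Z(A_C), I_* Z(A_C)) = Z(S^4)$, so that $(Z(A_C), I_* Z(A_C)) = \|Z(A_C)\|^2$.

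Finally I would invoke unitarity (Axiom~\ref{Axiom:Unitarity}) to make this rigidity bite. If $Z(A_C) = 0$ both invariants vanish and there is nothing to prove, so assume $Z(A_C) \neq 0$; unitarity then forces $Z(S^4) = \|Z(A_C)\|^2 \neq 0$. Because $I$ is a diffeomorphism, naturality makes $I_*$ unitary, so $\|I_* Z(A_C)\| = \|Z(A_C)\|$, and expanding $\|I_* Z(A_C) - Z(A_C)\|^2 = \|I_* Z(A_C)\|^2 - (Z(A_C), I_* Z(A_C)) - \overline{(Z(A_C), I_* Z(A_C))} + \|Z(A_C)\|^2$ shows this norm vanishes. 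Hence $I_* Z(A_C) = Z(A_C)$, and therefore $Z(M) = Z(M')$.

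I expect the main obstacle to be this last step: extracting from the bare nondegeneracy asserted in Axiom~\ref{Axiom:Unitarity} a genuine positive-definite Hermitian inner product on $\mathcal{H}(\Sigma)$ for which $Z(\overline{A_C})$ is the adjoint of $Z(A_C)$ and for which diffeomorphism-induced maps such as $I_*$ act unitarily. Once this unitary package is in place the Cauchy--Schwarz-type equality argument is immediate, whereas the two $S^4$ identifications are, by comparison, essentially bookkeeping resting on properties (5) and (6) of Theorem~\ref{Theorem:PrecisOfAkbulutCorks}.
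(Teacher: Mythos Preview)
Your reduction is exactly the paper's: invoke the strengthened Freedman theorem, extract the cork, and reduce $Z(M)=Z(M')$ to $I_*Z(A_C)=Z(A_C)$ via functoriality and the vacuum-vector formalism. The two $S^4$ identifications are likewise the paper's, though for the \emph{twisted} double the paper uses property~(4) of Theorem~\ref{Theorem:PrecisOfAkbulutCorks} directly ($A\cong D^5$, hence $\partial A=\overline{A_C}\cup_I A_C\cong S^4$); your route through~(6) alone only transports you to $\overline{A_-}\cup_{id}A_+$, which is $\partial A$, so you still need~(4) to close it out.

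The one substantive divergence is the final rigidity step, and there you have manufactured a difficulty the paper simply sidesteps. You try to upgrade Axiom~\ref{Axiom:Unitarity} to a genuine positive-definite Hermitian form on $\mathcal{H}(\Sigma)$ for which $I_*$ is unitary, and then run a Cauchy--Schwarz equality argument; you correctly flag this upgrade as unproven. The paper never needs it. It works directly with the bilinear pairing $\langle Z(\overline{\,\cdot\,})\mid\cdot\rangle$ and expands
\[
\bigl\langle Z(\overline{A_C})-I(Z(\overline{A_C}))\,\bigm|\,Z(A_C)-I(Z(A_C))\bigr\rangle
=2\bigl(Z(\overline{A_C}\cup_{id}A_C)-Z(\overline{A_C}\cup_{I}A_C)\bigr),
\]
the diagonal term $\langle I(Z(\overline{A_C}))\mid I(Z(A_C))\rangle=Z(\overline{A_C}\cup_{I^2}A_C)=Z(\overline{A_C}\cup_{id}A_C)$ coming from the fact that $I$ is an \emph{involution}, not from any abstract unitarity of $I_*$. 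Both summands on the right equal $Z(S^4)$, the pairing vanishes, and the contrapositive of Axiom~\ref{Axiom:Unitarity} gives $Z(A_C)-I(Z(A_C))=0$. No Hermitian structure, no complex conjugation, no norm-preservation of $I_*$ is invoked, so your anticipated ``main obstacle'' does not arise. In short: replace your appeal to $\|I_*Z(A_C)\|=\|Z(A_C)\|$ by the direct computation of the $I^2$-glued term, and your argument becomes the paper's.
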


\begin{proof}
Assume there exists a smooth manifold $M'$ homeomorphic but not
diffeomorphic to $M$, in other words $M'$ has a different smooth
structure than $M$. We will prove that $Z(M) = Z(M')$ for any
unitary, axiomatic topological quantum field theory.

As $M$ and $M'$ are homeomorphic, 
Theorem~\ref{Theorem:StrengthenedFreedmanHCobordisimTheorem},
the strengthened Freedman's h-cobordisim theorem, implies that
there exists an h-cobordisim $W$ from $M$ to $M'$.

As there exists an h-cobordisim $W$ from $M$ to $M'$, 
Theorem~\ref{Theorem:PrecisOfAkbulutCorks} implies that there exists 
an Akbulut cork $A_C \subset M$ such that 
\begin{equation*}
M = (M - int(A_C))\cup_{id} A_C
\end{equation*}
and
\begin{equation*}
M' = (M - int(A_C)) \cup_{I} A_C,
\end{equation*}
where $I$ is the involution of $\partial A_C$ given in part
(6) of Theorem~\ref{Theorem:PrecisOfAkbulutCorks}.

As $M = (M - int(A_C)) \cup_{id} A_C$, the results of
Section~\ref{SubSection:Remarks} imply the equality 
\begin{equation*}
Z(M) = \langle Z(M - int(A_C)) | Z(A_C) \rangle
\end{equation*}
Similarly, as $M' = (M - int(A_C)) \cup_{I} A_C$, the results of
Section~\ref{SubSection:Remarks} along with 
Axiom~\ref{Axiom:Functoriality}, the functoriality axiom, imply 
\begin{equation*}
Z(M') =  \langle Z(M - int(A_C)) | I(Z(A_C)) \rangle,
\end{equation*}
where $I$ is the isomorphism of $\mathcal{H}(\partial A_C)$ induced
by the involution $I$ of $\partial A_C$. Thus, to prove $Z(M) = Z(M')$ 
we only need to prove $Z(A_C) = I(Z(A_C))$, or, equivalently, we need 
to prove $Z(A_C) - I(Z(A_C)) = 0$.

If $Z(A_C) - I(Z(A_C)) = 0$, then we are done. So, we can thus safely 
assume that $Z(A_C) - I(Z(A_C)) \ne 0$. Hence, 
Axiom~\ref{Axiom:Unitarity}, the unitarity axiom, implies that if 
the product $\langle Z(\overline{A_C}) - I(Z(\overline{A_C})) | Z(A_C) - I(Z(A_C)) 
\rangle = 0$, then $Z(A_C) - I(Z(A_C)) = 0$. So, if we can prove that
$\langle Z(\overline{A_C}) - I(Z(\overline{A_C})) | Z(A_C) - I(Z(A_C)) 
\rangle = 0$, we are done.

Now, using linearity along with our various definitions we have
\begin{align*}
\lefteqn{\langle Z(\overline{A_C}) - I(Z(\overline{A_C})) | Z(A_C) - I(Z(A_C)) 
\rangle} \\
&= \langle Z(\overline{A_C}) | Z(A_C) \rangle -
\langle Z(\overline{A_C}) | I(Z(A_C)) \rangle - \\
&\qquad \qquad \qquad {} \langle I(Z(\overline{A_C})) | Z(A_C) \rangle +
\langle I(Z(\overline{A_C})) | I(Z(A_C)) \rangle \\
&= Z(\overline{A_C} \cup_{id} A_C) -
Z(\overline{A_C} \cup_I A_C) -
Z(\overline{A_C} \cup_I A_C) +
Z(\overline{A_C} \cup_{I^2} A_C) \\
&= Z(\overline{A_C} \cup_{id} A_C) -
Z(\overline{A_C} \cup_I A_C) -
Z(\overline{A_C} \cup_I A_C) +
Z(\overline{A_C} \cup_{id} A_C) \\
&= 2 ( Z(\overline{A_C} \cup_{id} A_C) -
Z(\overline{A_C} \cup_I A_C) ),
\end{align*}
where in the second to last line we have used the fact that $I$ is
an involution and thus $I^2 = id$. As a result of the previous 
computation, we find that our desired conclusion follows if we can
prove $Z(\overline{A_C} \cup_{id} A_C) - Z(\overline{A_C} \cup_I A_C) =
0$.

Now, part (5) of Theorem~\ref{Theorem:PrecisOfAkbulutCorks}, pr\'ecis 
of Akbulut corks, implies $A_C \times [0,1]$ is diffeomorphic to
$D^5$, the standard five-dimensional disk with boundary. As
$\partial (A_C \times [0,1]) = \overline{A_C} \cup_{id} A_C$,
this implies $\overline{A_C} \cup_{id} A_C = S^4$, where $S^4$
is the standard four-dimensional sphere. Thus, $Z(\overline{A_C} 
\cup_{id} A_C) = Z(S^4)$.

Part (4) of Theorem~\ref{Theorem:PrecisOfAkbulutCorks}, pr\'ecis 
of Akbulut corks, implies $A$, of 
Theorem~\ref{Theorem:PrecisOfAkbulutCorks}, is diffeomorphic to
$D^5$. As $\partial A = \overline{A_C} \cup_I A_C$ in our case, 
this implies $\overline{A_C} \cup_I A_C = S^4$. Thus, 
$Z(\overline{A_C} \cup_I A_C) = Z(S^4)$.

Collecting the results of the last two paragraphs,
\begin{equation*}
Z(\overline{A_C} \cup_{id} A_C) - Z(\overline{A_C} \cup_I A_C) 
= Z(S^4) - Z(S^4)
= 0.
\end{equation*}
So, tracing all our previous steps, we have proven $Z(M) = Z(M')$.
\end{proof}

\section{Remarks}
\label{Section:Remarks}

The results of Theorem~\ref{Theorem:MainTheorem} seem, somehow, 
unsatisfying. It is well known that Donaldson-Witten 
theory~\cite{Witten88} is a TQFT, in Witten's informal sense,
that is able to detect changes in the smooth structure of $M$, a
simply connected, closed, oriented smooth four-manifold. So,
it comes as somewhat of a surprise that any unitary, axiomatic TQFT
can not detect changes in the smooth structure of such an $M$.
It feels as if axiomatic TQFT is lacking something that is
present in Donaldson-Witten theory, and indeed this is the
case. However, the modifications that one must make to axiomatic
TQFT in order to allow it to detect changes in smooth structure
are relatively easy to spot upon thinking a bit about what is
happening in the scenario above.

Axiomatic TQFT in four-dimensions is, rather unsurprisingly,
a four-dimensional theory. So, in particular, all of its
symmetries should arise from symmetries that appear naturally
in four-dimensions. For example, Axiom~\ref{Axiom:Naturality}, 
the naturality axiom, implies that any axiomatic TQFT in
four-dimensions is invariant with respect to four-dimensional
diffeomorphisms. This makes sense. This is a purely four-dimensional
symmetry that arises in a purely four-dimensional theory.
However, by contrast, Axiom~\ref{Axiom:Naturality} also
states that any orientation--preserving diffeomorphism of closed, 
oriented, three-dimensional smooth manifolds $f:X \rightarrow X'$ 
induces an isomorphism $f: \mathcal{H}(X) \rightarrow \mathcal{H}(X')$.
At first glance this seems harmless, but, in fact, it is not.

The involution $I$ of $\partial A_C$ from part (6) of 
Theorem~\ref{Theorem:PrecisOfAkbulutCorks}, pr\'ecis of Akbulut corks,
is a diffeomorphism of $\partial A_C$ that does not arise from
a diffeomorphism of $A_C$. In other words one can not continue
$I$ over $A_C$ as a diffeomorphism. The best one can do is to
continue $I$ over $A_C$ as a homeomorphism\footnote{The easiest
way to see this is to note that if one could continue $I$ over
$A_C$ as a diffeomorphism, then one could prove Smale's
h-cobordism in four-dimensions, a result known to be false.}.
So, the assertion in Axiom~\ref{Axiom:Naturality} that $I$
gives rise to an isomorphism $I: \mathcal{H}(\partial A_C) 
\rightarrow \mathcal{H}(I(\partial A_C))$ is asserting that
there exists a symmetry in the four-dimensional theory
that has no natural origin in four-dimensions, as there
exists no four-dimensional diffeomorphism of $A_C$ that
when restricted to  $\partial A_C$ yields $I$. In other
words, it is, without any ``physical'' justification, enlarging 
the symmetry group of the theory. In point of fact, it is just
this enlarged symmetry group that we are seeing in
Theorem~\ref{Theorem:MainTheorem}.

The modifications that one must make to the TQFT axioms such
that they allow for detection of changes in smooth structure
are rather straightforward. To wit, one must limit the set of 
orientation-preserving diffeomorphisms that give rise to
isomorphisms of $\mathcal{H}(X)$. More specifically,
if $X$ is a closed, oriented 
$n$-dimensional smooth submanifold of a compact, oriented 
$(n+1)$-dimensional smooth manifold $W$, then any 
orientation-preserving diffeomorphism $f$ of $X$ that 
arises as a restriction of an orientation-preserving diffeomorphism
of $W$ induces an isomorphism $f: \mathcal{H}(X) \rightarrow 
\mathcal{H}(f(X))$. If $f'$ is an orientation-preserving 
diffeomorphism of $X$ that does not arise in such a manner,
then its action on $\mathcal{H}(X)$ is undefined. If we call
an orientation-preserving diffeomorphism $f$ that arises in such 
a manner a \textit{restricted} orientation-preserving diffeomorphism,
then the naturality and functoriality TQFT axioms must be modified
in the following manner so as to allow for detection of changes in 
smooth structure\footnote{One immediately sees that if one uses
these new axioms, the proof of Theorem~\ref{Theorem:MainTheorem}
fails.}.

\subsection{Naturality}
\label{SubSection:Naturality}

\begin{axiom}[Naturality]
\label{Axiom:NaturalityII}
Any orientation-preserving diffeomorphism $f$ of $X$,
a closed, oriented $n$-dimensional smooth submanifold of $W$ a
compact, oriented $(n+1)$-dimensional smooth manifold,
that arises as a restriction of an orientation-preserving 
diffeomorphism of $W$
induces an isomorphism $f: \mathcal{H}(X) \rightarrow \mathcal{H}(f(X))$. 
For an orientation--preserving diffeomorphism $g$ from the cobordism 
$(W,X_-,X_+)$ to the cobordism $(W',X_-',X_+')$, the following diagram
is commutative.
\[
\xymatrixcolsep{5pc}
\xymatrix{
\mathcal{H}(X_-) \ar[d]_{Z(W)} \ar[r]^{g_{|_{X_-}}} &\mathcal{H}(X_-')\ar[d]^{Z(W')}\\
\mathcal{H}(X_+) \ar[r]^{g_{|_{X_+}}} &\mathcal{H}(X_+')}
\]
Note, $Z(W)$ is shorthand for $Z(W,X_-,X_+)$ and
$Z(W')$ is shorthand for $Z(W',X_-',X_+')$.
\end{axiom}

\subsection{Functoriality}
\label{SubSection:Functoriality}

\begin{axiom}[Functoriality]
\label{Axiom:FunctorialityII}
If a cobordism $(W,X_-,X_+)$ is obtained by gluing two cobordisms 
$(M,X_-,X)$ and $(M',X',X_+)$ using an orientation-preserving 
diffeomorphism $f$ where $f: X \rightarrow X'$ and $f$ can be viewed
as the restriction of an orientation-preserving diffeomorphism of $W$,
then following diagram is commutative.
\[
\xymatrixcolsep{5pc}
\xymatrix{
\mathcal{H}(X_-) \ar[d]_{Z(M)} \ar[r]^{Z(W)} &\mathcal{H}(X_+)\\
\mathcal{H}(X) \ar[r]^{f} &\mathcal{H}(X')\ar[u]_{Z(M')}}
\]
\end{axiom}

\section{Conclusion}
\label{Section:Conclusion}

\subsection{Remarks}

The standard formulation of axiomatic TQFT~\cite{Blanchet2006232} is
sufficient for many situations in fewer than four dimensions. However,
in four-dimensions the standard axiomatic formulation requires some
small modifications if it is to detect changes in smooth structure. These
modifications are required as there exist orientation-preserving diffeomorphisms
of $\partial A_C$ that do not extend to orientation-preserving diffeomorphisms
of the smooth four-manifold $A_C$. (In fewer than four-dimensions such
diffeomorphisms do not exist\footnote{If they existed in fewer than 
four-dimensions, then there would exist exotic manifolds in three
or fewer dimensions. There exist no such manifolds.}.) If these 
small modifications are made, one obtains a set of axioms that allow
for the detection of changes in the smooth structure of a 
four-manifold\footnote{Note, $\mathcal{H}(X)$ may also have to
be infinite dimensional in four-dimensions.}.

\subsection{Axiomatic DQFT}

We call the construct resulting from the modified axioms \textit{axiomatic differential quantum field theory}. In summary its axioms are as follows.

\subsubsection{Naturality}

\begin{axiom}[Naturality]
Any orientation-preserving diffeomorphism $f$ of $X$,
a closed, oriented $n$-dimensional smooth submanifold of $W$ a
compact, oriented $(n+1)$-dimensional smooth manifold,
that arises as a restriction of an orientation-preserving 
diffeomorphism of $W$
induces an isomorphism $f: \mathcal{H}(X) \rightarrow \mathcal{H}(f(X))$. 
For an orientation--preserving diffeomorphism $g$ from the cobordism 
$(W,X_-,X_+)$ to the cobordism $(W',X_-',X_+')$, the following diagram
is commutative.
\[
\xymatrixcolsep{5pc}
\xymatrix{
\mathcal{H}(X_-) \ar[d]_{Z(W)} \ar[r]^{g_{|_{X_-}}} &\mathcal{H}(X_-')\ar[d]^{Z(W')}\\
\mathcal{H}(X_+) \ar[r]^{g_{|_{X_+}}} &\mathcal{H}(X_+')}
\]
Note, $Z(W)$ is shorthand for $Z(W,X_-,X_+)$ and
$Z(W')$ is shorthand for $Z(W',X_-',X_+')$.
\end{axiom}

\subsubsection{Functoriality}

\begin{axiom}[Functoriality]
If a cobordism $(W,X_-,X_+)$ is obtained by gluing two cobordisms 
$(M,X_-,X)$ and $(M',X',X_+)$ using an orientation-preserving 
diffeomorphism $f$ where $f: X \rightarrow X'$ and $f$ can be viewed
as the restriction of an orientation-preserving diffeomorphism of $W$,
then following diagram is commutative.
\[
\xymatrixcolsep{5pc}
\xymatrix{
\mathcal{H}(X_-) \ar[d]_{Z(M)} \ar[r]^{Z(W)} &\mathcal{H}(X_+)\\
\mathcal{H}(X) \ar[r]^{f} &\mathcal{H}(X')\ar[u]_{Z(M')}}
\]
\end{axiom}

\subsubsection{Normalization}

\begin{axiom}[Normalization]
For any closed, oriented $n$-dimensional smooth manifold $X$, the
$\mathbb{F}$ linear map
\begin{equation*}
Z(X \times [0,1]): 
\mathcal{H}(X) \rightarrow \mathcal{H}(X)
\end{equation*}
is the identity.
\end{axiom}

\subsubsection{Multiplicativity}

\begin{axiom}[Multiplicativity]
There are functorial isomorphisms
\begin{equation*}
\mathcal{H}(X \amalg Y) \longrightarrow \mathcal{H}(X) \otimes \mathcal{H}(Y)
\end{equation*}
and
\begin{equation*}
\mathcal{H}(\emptyset) \longrightarrow \mathbb{F}
\end{equation*}
such that the diagrams
\[
\xymatrix{
\mathcal{H}((X_1 \amalg X_2) \amalg X_3) \ar[d] \ar[r] &(\mathcal{H}(X_1) \otimes \mathcal{H}(X_2)) \otimes \mathcal{H}(X_3)\ar[d]\\
\mathcal{H}(X_1 \amalg (X_2 \amalg X_3)) \ar[r] &\mathcal{H}(X_1) \otimes (\mathcal{H}(X_2) \otimes \mathcal{H}(X_3))}
\]
and
\[
\xymatrix{
\mathcal{H}(X \amalg \emptyset) \ar[d] \ar[r] &\mathcal{H}(X) \otimes \mathbb{F}\ar[d] \\
\mathcal{H}(X) \ar[r]^{id} &\mathcal{H}(X)}
\]
commute. Note, the vertical maps are induced by the obvious
diffeomorphisms and the standard vector space isomorphisms. 
\end{axiom}

\subsubsection{Symmetry}

\begin{axiom}[Symmetry]
The isomorphism
\begin{equation*}
\mathcal{H}(X \amalg Y) \longrightarrow \mathcal{H}(Y \amalg X)
\end{equation*}
induced by the obvious diffeomorphism corresponds to the standard
isomorphism of vector spaces
\begin{equation*}
\mathcal{H}(X) \otimes \mathcal{H}(Y) \longrightarrow \mathcal{H}(Y) \otimes \mathcal{H}(X).
\end{equation*}
\end{axiom}

\section{Afterward}
\label{Section:Afterward}

Upon distributing this preprint, it has come to the author's attention
that a proof of a result similar to Theorem 4.1 was given as Theorem 4.1
of Freedman et al.~\cite{Freedman05}. In addition, the author was informed
of a research program with a focus similar to that of this preprint. This
research program was launched by Freedman, Kitaev, Nayak, Slingerland,
Walker, and Wang in~\cite{Freedman05}, continued by Kreck and Teichner 
in~\cite{Kreck08}, and furthered by Calegari, Freedman, and 
Walker in~\cite{Calegari10}.

\bibliographystyle{plain}
\bibliography{References}

\end{document}